\begin{document}



\section{Introduction and main result}
We recall here the basics about the $\Lambda$-Wright-Fisher process with selection. This process represents the evolution of the frequency of a deleterious allele. When no selection is taken into account, we refer the reader to Bertoin-Le Gall \cite{LGB2} and Dawson-Li \cite{Dawson} who have introduced this process as a solution to some specific stochastic differential equation driven by a random Poisson measure. Recently Bah and Pardoux \cite{Bah} have considered a lookdown approach to construct a particle system whose empirical distribution converges to the strong solution to
\begin{equation}\label{SDE}
X_{t}=x+\int_{[0,t]\times [0,1] \times [0,1]}z\left(1_{u\leq X_{s-}}-X_{s-}\right)\bar{\mathcal{M}}(ds,du,dz)-\alpha\int_{0}^{t}X_{s}(1-X_{s})ds
\end{equation}
where $\bar{\mathcal{M}}$ is a compensated Poisson measure $\mathcal{M}$ on $\mathbb{R}_{+}\times [0,1]\times [0,1]$ whose intensity is $ds\otimes du\otimes z^{-2}\Lambda(dz)$. Strong uniqueness of the solution to (\ref{SDE}) follows from an application of Theorem 2.1 in \cite{Dawson}. The process $(X_{t}, t\geq 0)$ should be interpreted as follows: it represents the frequency of a deleterious allele as time passes. When $\alpha>0$, the logistic term $-\alpha X_{t}(1-X_{t})dt$ makes the frequency of the allele decrease, this is the phenomenon of selection. Heuristically, the equation (\ref{SDE}) can be understood as follows:
\begin{itemize}
\item Denote the frequency of the allele just before time $s$ by $X_{s-}$. If $(s,u,z)$ is an atom of the measure $\mathcal{M}$, then, at time $s$,  
\begin{itemize}
\item if $u\leq X_{s-}$, the frequency of the allele increases by a fraction $z(1-X_{s-})$
\item if $u>X_{s-}$, the frequency of the allele decreases by a fraction $zX_{s-}$.   
\end{itemize}
\item Continuously in time, the frequency decreases due to the deterministic selection mechanism.
\end{itemize}
Note that we are dealing with a two-allele model: at any time $t$, the \textit{advantageous} allele has frequency $1-X_{t}$. The purely diffusive case is well understood (this is the classical Wright-Fisher diffusion, see e.g. Chapters 3 and 5 of Etheridge's monography \cite{etheridge2011} for a complete study). We mention that Section 5 of Bah and Pardoux \cite{Bah} incorporates a diffusion term in the SDE (\ref{SDE}). In such cases, the measure $\Lambda$ has an atom at $0$ and it has been already established in \cite{Bah} that these processes are absorbed in finite time. We then focus on measures $\Lambda$ carried on $]0,1]$ (see Remark \ref{rem}). Lastly, the process $(X_{t}, t\geq 0)$ should be interpreted as one of the simplest models introducing natural selection together with random genetic drift (that is, the random resampling governed by $\Lambda$).
\\

Plainly, the process $(X_{t}, t\geq 0)$ lies in $[0,1]$ and is a supermartingale. Therefore, the process $(X_{t}, t\geq 0)$ has an almost-sure limit denoted by $X_{\infty}$. This random variable is the frequency at equilibrium. Since $0$ and $1$ are the only absorbing states, the random variable $X_{\infty}$ lies in $\{0,1\}$. Moreover if $\alpha>0$, the supermartingale property yields that for all $x$ in $[0,1]$, \[\mathbb{P}[X_{\infty}=1|X_{0}=x]=\mathbb{E}[X_{\infty}|X_{0}=x]<x.\]
Our main result is the following theorem.
\begin{theorem}\label{main}Let $\alpha^{\star}:=-\int_{0}^{1}\log(1-x)\frac{\Lambda(dx)}{x^{2}}\in (0,\infty]$. Then,
\begin{itemize}
\item[1)] if $\alpha<\alpha^{\star}$ then for all $x\in (0,1)$, $0<\mathbb{P}[X_{\infty}=0|X_{0}=x]<1$,
\item[2)] if $\alpha^{\star}<\infty$ and $\alpha>\alpha^{\star}$ then $X_{\infty}=0$ a.s.
\end{itemize}
\end{theorem}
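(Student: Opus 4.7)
The approach is via a \emph{moment duality}. The solution $(X_t)$ to (\ref{SDE}) is dual to the block-counting process $(N_t)$ of a ``branching $\Lambda$-coalescent'': a Markov chain on $\{1,2,\ldots\}$ whose generator acts on bounded $g$ by
\[
\mathcal{A}g(n) = \alpha n[g(n+1)-g(n)] + \sum_{k=2}^n \binom{n}{k}\lambda_{n,k}[g(n-k+1)-g(n)],
\]
with $\lambda_{n,k}=\int_0^1 z^{k-2}(1-z)^{n-k}\Lambda(dz)$. The coalescence part is the usual $\Lambda$-coalescent; the upward jumps $n\to n+1$ at rate $\alpha n$ encode the virtual branching of ancestral lineages due to selection. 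A generator-matching computation on the monomials $x\mapsto x^n$ yields the duality $\mathbb{E}_x[X_t^n]=\mathbb{E}_n[x^{N_t}]$ for all $t\geq 0$, $n\geq 1$, $x\in[0,1]$. Since the branching rate in $N$ is linear, $(N_t)$ is non-explosive ($\mathbb{E}[N_t]\leq N_0 e^{\alpha t}$). Letting $t\to\infty$ and using $X_\infty\in\{0,1\}$ a.s., bounded convergence gives
\[
\mathbb{P}_x[X_\infty=1] = \lim_{t\to\infty}\mathbb{E}_n[x^{N_t}]\quad\text{for every } n\geq 1,
\]
so everything reduces to the long-time behaviour of $(N_t)$.

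The decisive observation is that the drift of $\log N_t$ converges to $\alpha-\alpha^{\star}$. Applying $\mathcal{A}$ to $g(n)=\log n$, the branching contribution is $\alpha n\log(1+1/n)\to\alpha$, and the coalescence contribution can be rewritten as
\[
\int_0^1 z^{-2}\Lambda(dz)\,\mathbb{E}[\log((n-K+1)/n)\mathbf{1}_{K\geq 2}],\qquad K\sim\mathrm{Bin}(n,z).
\]
For each fixed $z$, the weak law of large numbers gives $K/n\to z$ in probability, so the inner expectation converges to $\log(1-z)$. A dominated-convergence argument, legitimate precisely because $\alpha^{\star}=-\int_0^1\log(1-z) z^{-2}\Lambda(dz)$ provides the integrable majorant, yields the limit $-\alpha^{\star}$. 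Hence the log-drift at state $n$ tends to $\alpha-\alpha^{\star}$, and its sign governs whether $N_t\to\infty$.

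The plan is then to promote this asymptotic drift into a bounded Lyapunov function. For Part~2 ($\alpha>\alpha^{\star}$), I would construct $\phi\colon\mathbb{N}\to[0,1]$ with $\phi(n)\uparrow 1$ and $\mathcal{A}\phi(n)\geq 0$ for all $n$ sufficiently large; combined with the irreducibility of $(N_t)$ on $\{1,2,\ldots\}$ (every state is reached from $\{1\}$ by successive branchings), the submartingale property forces $N_t\to\infty$ almost surely, and the duality delivers $\mathbb{P}_x[X_\infty=1]=\lim_t\mathbb{E}_1[x^{N_t}]=0$ for $x\in(0,1)$. For Part~1 ($\alpha<\alpha^{\star}$), a dual Lyapunov argument with $\mathcal{A}\phi\leq -c<0$ off a finite set shows that $(N_t)$ returns to any fixed state infinitely often; hence $\liminf_t\mathbb{E}_1[x^{N_t}]>0$ and $\mathbb{P}_x[X_\infty=1]>0$, i.e.\ $\mathbb{P}_x[X_\infty=0]<1$. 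The complementary bound $\mathbb{P}_x[X_\infty=0]>0$ is a separate and simpler consequence of (\ref{SDE}): from $x\in(0,1)$ the downward Poisson jumps bring $X$ arbitrarily close to $0$ with positive probability. I expect the main obstacle to be the rigorous construction of $\phi$: turning the pointwise log-drift asymptotic into a uniform-in-$n$ sub/supermartingale inequality requires careful handling of both the small-$n$ boundary and the integral over $z$, likely via a truncation separating $z$ near $0$ from $z$ near $1$.
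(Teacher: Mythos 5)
Your route is the same as the paper's: the moment duality with the branching--coalescing chain, the reduction of the theorem to positive recurrence versus transience of that chain (Lemma \ref{lemma}), and Lyapunov functions built on logarithms whose drift is asymptotically $\alpha-\alpha^{\star}$ --- this is exactly the architecture of Lemmas \ref{functionf}, \ref{martingale} and \ref{transience}. Your Yule-process domination for non-explosion is a legitimate shortcut compared with the paper's Reuter-criterion argument, and your identification of $\alpha-\alpha^{\star}$ as the limiting log-drift is the right heuristic.

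The genuine gap is the one you flag yourself: converting the pointwise asymptotic into actual sub/supermartingale inequalities is where essentially all of Section 2 lives, and it is not routine. Two concrete warnings. First, for Part 1 your conclusion ``returns to any fixed state infinitely often, hence $\liminf_t\mathbb{E}_1[x^{N_t}]>0$'' is too weak as stated: null recurrence would not prevent $\mathbb{E}_1[x^{N_t}]\to 0$. You need \emph{positive} recurrence, i.e.\ a finite expected return time, which the paper extracts by optional stopping applied to the truncation $f_N(l)=f(l)1_{l\leq N+1}$ of $f(l)=\sum_{k=2}^{l}\frac{k}{\delta(k)}\log\bigl(\tfrac{k}{k-1}\bigr)$, with $\delta$ the M\"ohle--Herriger function; the monotonicity of $l\mapsto l/\delta(l)$ together with the Jensen-type bound of Lemma \ref{majorationdelta} is what makes $\mathcal{L}f(l)\leq -1+\alpha l/\delta(l)$ hold for \emph{all} $l\geq 2$ rather than only asymptotically, and the truncation plus non-explosion handle the localization that a naive Foster argument would skip. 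Second, your dominated-convergence step needs an honest majorant near $z=1$: the event $\{K=n\}$ contributes $z^{n}\log n$ to $\mathbb{E}\bigl[-\log\bigl((n-K+1)/n\bigr)\bigr]$, so one must verify that $\sup_n z^n\log n$ is controlled by $-\log(1-z)$ (it is, but not termwise); the paper's transience proof avoids this by using $g(n)=1/\log(n+1)$, whose increments are bounded, so that only a Chebyshev estimate and $\int_0^1 x^{-1}(1-x)\Lambda(dx)<\infty$ are required.
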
 
\begin{remark}
\begin{itemize}
\item As already mentioned, some $\Lambda$-Wright-Fisher processes with selection are absorbed in finite time (for instance the diffusive one). Such processes verify $\alpha^{\star}=\infty$. More precisely, Bah and Pardoux in Section 4.2 of \cite{Bah} show that they are related to measures $\Lambda$ satisfying the criterion of coming down from infinity.    
\item The condition $\int_{0}^{1}x^{-1}\Lambda(dx)=\infty$ implies that $-\int_{0}^{1}\log(1-x)x^{-2}\Lambda(dx)=\infty$. One can recognize the first integral condition as the dust-free criterion (see Lemma 25 and Proposition 26 in Pitman's article \cite{Pitman}). In other words, the dust-free condition ensures that the deleterious allele does not disappear with probability one. Namely, it may survive in the long run with positive probability. It is worth observing that some measure $\Lambda$ verify $-\int_{0}^{1}\log(1-x)x^{-2}\Lambda(dx)=\infty$ and $\int_{0}^{1}x^{-1}\Lambda(dx)<\infty$. An example is provided in the proof of Corollary 4.2 of M\"ohle and Herriger \cite{MohleHerriger}.
\item Bah and Pardoux in Section 4.3 of \cite{Bah} have obtained a first result on the impact of selection. Namely they show that if $\alpha>\mu:=\int_{0}^{1}\frac{1}{x(1-x)}\Lambda(dx)$ then $X_{\infty}=0$ almost surely. We highlight that the quantity $\mu$ is strictly larger than $\alpha^{\star}$ and that our method does not rely on the look-down construction. 
\item Der, Epstein and Plotkin \cite{Der} and \cite{Der1} obtain several results in the framework of finite populations with selection. They announce the results of Theorem \ref{main} in \cite{Der1}. However their proofs treat only the case when $\Lambda$ is a Dirac mass. Their method is based on a study of the generator of $(X_{t}, t\geq 0)$ and differs from ours.
\end{itemize}
\end{remark}
Except in the case of simple measures $\Lambda$, the expression of $\alpha^{\star}$ is rather complicated. We provide a few examples.
\begin{example} 
\begin{itemize}
\item Let $x \in ]0,1]$ and $c>0$, consider $\Lambda=c\delta_{x}$. We have \[\alpha^{\star}:x \mapsto -c\log(1-x)/x^{2}.\] The limit case $x=0$ corresponds to the Wright-Fisher diffusion and we have $\alpha^{\star}(0)=\infty$. When $x=1$, we also have $\alpha^{\star}(1)=\infty$ (this is the so-called star-shaped mechanism). Note that the map $\alpha^{\star}$ is convex and has a local minimum in $(0,1)$. Thus, in this model (called the Eldon-Wakeley model, see e.g. Birkner and Blath \cite{MR2562160}) the selection pressure which ensures the extinction of the disadvantaged allele is not a monotonic function of $x$. 
\item Let $a>0, b>0$, consider $\Lambda=Beta(a,b)$ where $Beta(a,b)$ is the unnormalized Beta measure with density $f(x)=x^{a-1}(1-x)^{b-1}$. 
\begin{itemize}
\item If $a=2$, one can easily compute $\alpha^{\star}(b)= \int_{0}^{\infty}\frac{te^{-bt}}{1-e^{-t}}dt=\zeta(2,b)$ (where $\zeta$ denotes the Hurwitz Zeta function).
\item If $b=1$ and $a>1$, we have $\alpha^{\star}(a)=\int_{0}^{\infty}te^{-t}(1-e^{-t})^{a-3}dt$.  If $a\leq 1, \alpha^{\star}(a)=\infty$.       
\end{itemize} 
The computation is more involved for general measures $Beta$, see Gnedin \textit{et al.} \cite{MR2896672} page 1442.
\end{itemize} 
\end{example} 
A direct study of the process $(X_t, t\geq 0)$ and its limit based on the SDE (\ref{SDE}) seems a priori rather involved. 
The key tool that will allow us to get some information about $X_{\infty}$ is a duality between $(X_{t}, t\geq 0)$ and a continuous-time Markov chain with values in $\mathbb{N}:=\{1,2,...\}$. Namely consider $(R_{t}, t\geq 0)$ with generator $\mathcal{L}$ defined as follows. For every $g: \mathbb{N}\rightarrow \mathbb{R}$:
\begin{equation} \label{generator} \mathcal{L}g(n)=\sum_{k=2}^{n}\binom{n}{k}\lambda_{n,k}[g(n-k+1)-g(n)]+\alpha n[g(n+1)-g(n)] \end{equation} 
with \[\lambda_{n,k}=\int_{0}^{1}x^{k}(1-x)^{n-k}x^{-2}\Lambda(dx).\]
We have the following duality lemma:
\begin{lemma}\label{dual}For all $x\in [0,1], n\geq 1$,
\[\mathbb{E}[X_{t}^{n}|X_{0}=x]=\mathbb{E}[x^{R_{t}}|R_{0}=n].\]
\end{lemma}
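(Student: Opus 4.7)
The plan is to establish the moment duality by a standard generator-coupling argument with duality function $H(x,n):=x^n$, $x\in[0,1]$, $n\geq 1$. Writing $\mathcal{A}$ for the infinitesimal generator of $(X_t)$, the whole task reduces to checking
\[
\mathcal{A}H(\cdot,n)(x) \;=\; \mathcal{L}H(x,\cdot)(n) \qquad \text{for every } x\in[0,1],\ n\geq 1.
\]

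First, I would read off $\mathcal{A}$ from the SDE (\ref{SDE}) by an It\^o computation, integrating the uniform mark $u$ against Lebesgue measure on $[0,1]$, which gives, on $C^1([0,1])$,
\[
\mathcal{A}f(x)=\int_{0}^{1}\bigl[xf(x+z(1-x))+(1-x)f(x(1-z))-f(x)\bigr]z^{-2}\Lambda(dz)-\alpha x(1-x)f'(x).
\]
For $f(x)=x^n$ the selection drift contributes $\alpha n(x^{n+1}-x^n)$, which already matches the up-jump term of $\mathcal{L}$ applied to $n\mapsto x^n$. Since $\lambda_{n,k}=\int_0^1 z^{k-2}(1-z)^{n-k}\Lambda(dz)$, matching the resampling parts reduces to the pointwise-in-$z$ polynomial identity
\[
x(x+z(1-x))^n+(1-x)(x(1-z))^n-x^n=\sum_{k=2}^{n}\binom{n}{k}z^{k}(1-z)^{n-k}\bigl(x^{n-k+1}-x^n\bigr).
\]
I would prove this by setting $a:=x+z(1-x)$, $b:=x(1-z)$, noting $a=b+z$, expanding $a^n-b^n$ by the binomial theorem, multiplying by $x$ and extracting the $k=1$ contribution $nz(1-z)^{n-1}x^n$; the leftover $x^n$ terms then collapse via $\sum_{k=0}^{n}\binom{n}{k}z^{k}(1-z)^{n-k}=1$. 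This is the main calculational step but is purely algebraic and takes only a few lines.

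With the generator identity in hand, the duality follows classically. Taking independent versions of $(X_s)$ and $(R_u)$, the function $s\mapsto \phi(s):=\mathbb{E}[X_{s}^{R_{t-s}}]$ has vanishing derivative on $[0,t]$ by the identity together with the Markov property, so $\phi(0)=\phi(t)$ yields the claim. Justifying this scheme is straightforward: $H\leq 1$ gives integrability on the $X$-side, while on the $R$-side the total downward rate at state $n$ equals $\int_{0}^{1}[1-(1-z)^n-nz(1-z)^{n-1}]z^{-2}\Lambda(dz)$, which is finite for every $n$, and the linear upward rate $\alpha n$ prevents explosion. The main obstacle is thus the algebraic identity above rather than any analytical subtlety.
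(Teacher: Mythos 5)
Your proposal is correct: the key polynomial identity checks out (with $a=x+z(1-x)$, $b=x(1-z)$, $a=b+z$, the $k=0,1$ terms of $x(b+z)^n$ combine with $(1-x)b^n-x^n$ into $-x^n\sum_{k=2}^n\binom{n}{k}z^k(1-z)^{n-k}$, giving exactly the right-hand side), the drift terms match, and the integrability/non-explosion remarks suffice to run the standard duality scheme. This is essentially the same route as the paper, which does not prove the lemma itself but explicitly delegates it to the ``standard generator calculations'' of Bah and Pardoux (Equation 3.11 there) — precisely the computation you carry out.
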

When no selection is taken into account, this duality is well-known (see for instance the recent survey concerning duality methods of Jansen and Kurt \cite{Jansen:arXiv1210.7193}). Several works incorporate selection and study the dual process. We mention for instance the work of Neuhauser and Krone \cite{krone1997anc} in which the Wright-Fisher diffusion case is studied.
For a proof of Lemma \ref{dual}, which relies on standard generator calculations, see Equation 3.11 page 21 in Bah and Pardoux \cite{Bah}.\\

The process $(R_{t}, t\geq 0)$ is clearly irreducible and its properties are related to those of $(X_{t}, t\geq 0)$. The following lemma is crucial in our study. 
\begin{lemma}\label{lemma}
\begin{itemize}
\item[1)] If $(R_{t}, t\geq 0)$ is positive recurrent then the law of $X_{\infty}$ charges both $0$ and $1$.
\item[2)] If $(R_{t}, t\geq 0)$ is transient then $X_{\infty}=0$ almost surely.
\end{itemize}
\end{lemma}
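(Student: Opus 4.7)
The strategy is to exploit the duality of Lemma~\ref{dual} by letting $t \to \infty$ on both sides of the identity
\[
\mathbb{E}[X_t^n \mid X_0 = x] = \mathbb{E}[x^{R_t} \mid R_0 = n]
\]
and identifying the two limits. The left-hand side, which is treated identically in both parts, behaves very simply: because $X_t \to X_\infty$ almost surely and $X_\infty \in \{0,1\}$, one has $X_\infty^n = X_\infty = \mathbf{1}_{\{X_\infty = 1\}}$ for every integer $n \geq 1$, so bounded convergence yields
\[
\lim_{t\to\infty} \mathbb{E}[X_t^n \mid X_0 = x] = \mathbb{P}[X_\infty = 1 \mid X_0 = x] =: q(x).
\]
Note that this limit does not depend on $n$, which will force the right-hand side of the duality to become independent of $n$ in the limit as well.

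For part 1), if $(R_t)$ is positive recurrent let $\pi$ denote its (unique) stationary distribution, a probability measure on $\mathbb{N}$. Standard ergodic theory for irreducible continuous-time Markov chains gives $R_t \Rightarrow \pi$ under $\mathbb{P}_n$ for every starting state $n$. Since $k \mapsto x^k$ is bounded on $\mathbb{N}$, bounded convergence delivers
\[
\mathbb{E}[x^{R_t} \mid R_0 = n] \longrightarrow \phi(x) := \sum_{k \geq 1} \pi(k)\, x^k,
\]
so $q(x) = \phi(x)$. This generating function satisfies $\phi(0) = 0$ (because $\pi$ charges only integers $\geq 1$), $\phi(1) = 1$, and is strictly increasing on $[0,1]$, hence $0 < \phi(x) < 1$ for every $x \in (0,1)$. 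This is exactly the claim that the law of $X_\infty$ charges both $0$ and $1$.

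For part 2), assume $(R_t)$ is transient. Since the chain is irreducible on the countable state space $\mathbb{N}$, transience together with non-explosion forces $R_t \to \infty$ almost surely. For every $x \in [0,1)$ it then follows that $x^{R_t} \to 0$ almost surely, and bounded convergence gives $\mathbb{E}[x^{R_t} \mid R_0 = n] \to 0$; hence $q(x) = 0$ and $X_\infty = 0$ almost surely when $X_0 = x < 1$. The step I expect to be the main obstacle is precisely this non-explosion of $(R_t)$: without it one cannot cleanly pass from \emph{$R_t$ leaves every finite set} to \emph{$x^{R_t} \to 0$}. Fortunately it is easily established here, since the total upward rate from state $n$ equals $\alpha n$ and the pure birth chain with such linear rates is non-explosive ($\sum_n 1/(\alpha n) = \infty$), while the coalescence jumps can only further slow the upward motion. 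This careful justification of passage to the limit on the dual side is presumably the gap that the erratum mentioned in the paper's footnote is filling.
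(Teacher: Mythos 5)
Your proof is correct and follows essentially the same route as the paper: pass to the limit in the duality identity, using convergence of $(R_t)$ to its stationary distribution in the positive recurrent case (so that $\mathbb{P}[X_\infty=1\,|\,X_0=x]$ equals the generating function of $\pi$ evaluated at $x$, which lies strictly between $0$ and $1$) and using $R_t\to\infty$ together with dominated convergence in the transient case. The only cosmetic differences are that the paper bounds $\mathbb{E}[x^{R_\infty}]$ below via the return-time formula $\pi(n_0)\geq 1/\mathbb{E}_{n_0}[T_{n_0}]$ rather than by monotonicity of the generating function, and it isolates non-explosion (needed to pass from transience to $R_t\to\infty$ for all finite $t$) in a separate lemma proved via Reuter's criterion, whereas your comparison with the Yule process of rate $\alpha n$ is an equally valid way to get it.
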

\begin{proof}[Proof of Lemma \ref{lemma}] Recall that $(X_{t}, t\geq 0)$ is positive, bounded and converges almost surely. We first establish 1). Assume that the process $(R_t,t\geq 0)$ is positive recurrent. To conclude that the law of $X_{\infty}$ charges both $0$ and $1$, we use Lemma \ref{dual}. Hence, we have \[\mathbb{P}[X_{\infty}=1|X_{0}=x]=\mathbb{E}[X_{\infty}|X_{0}=x]\geq \mathbb{E}[X_{\infty}^{n}|X_{0}=x]=\mathbb{E}[x^{R_{\infty}}|R_{0}=n]\geq \frac{x^{n_{0}}}{\mathbb{E}_{n_{0}}[T_{n_{0}}]}>0,\]
where $R_{\infty}$ is a random variable with law, the stationary distribution of $(R_{t}, t\geq 0)$ and $T_{n_{0}}$ is the first return time to state $n_0$ of the chain $(R_t, t\geq 0)$. We prove now 2). Assume that the process $(R_t, t\geq 0)$ is transient. Plainly, applying the dominated convergence theorem in Lemma \ref{dual} with $n=1$, we have \[\mathbb{E}[X_{\infty}|X_{0}=x]=\underset{t\rightarrow \infty}\lim \mathbb{E}[x^{R_{t}}|R_{0}=1]=0, \text{ since } R_{t} \underset{t\rightarrow \infty}\longrightarrow \infty \text{ a.s. }\] Thus, $X_\infty=0$ almost surely. \end{proof}

Similarly to the block counting process of a $\Lambda$-coalescent, the process $(R_{t}, t\geq 0)$ has a genealogical interpretation. Roughly speaking, it counts the number of ancestors of a sample of individuals as time goes towards the past. Two kinds of events can occur:
\begin{itemize}
\item[1] A coalescence of lineages. When there are $n$ lineages, it occurs at rate \begin{equation} \label{ratecoal} \phi(n)=\sum_{k=2}^{n}\binom{n}{k}\lambda_{n,k}, \end{equation}
\item[2] A branching (a birth) event (modelling selection). When there are $n$ lineages, the process jumps to $n+1$ at rate $\alpha n$.
\end{itemize}
When a lineage splits in two, this should be understood as two potential ancestors. We refer the reader to Sections 5.2 and 5.4 of \cite{etheridge2011}, and also to Etheridge, Griffiths and Taylor \cite{Etheridge201077} where a dual coalescing-branching process is defined for a general $\Lambda$ mechanism.
\section{Coming down from infinity and study of $(R_{t}, t\geq 0)$}
Rather than working with the process satisfying the SDE (\ref{SDE}), we will work on the continuous-time Markov chain $(R_{t}, t\geq 0)$. Denote $\nu(dx):=x^{-2}\Lambda(dx)$ and define for all $n\geq 2$, \begin{equation} \label{delta} \delta(n):= -n\int_{0}^{1}\log\left(1-\frac{1}{n}[np-1+(1-p)^{n}]\right)\nu(dp).\end{equation} The maps $n\mapsto \delta(n)$ and $n\mapsto \delta(n)/n$ are both non-decreasing and $\delta(n)/n\uparrow \alpha^{\star}.$  For the proof of these monotonicity properties we refer the reader to the proof of Lemma 4.1 and to Corollary 4.2 in \cite{MohleHerriger}.
\\

Firstly, we need to say a word about coalescents and coming down from infinity. Then, we deal with the proof of Theorem \ref{main}. We will adapt some arguments due to M\"ohle and Herriger \cite{MohleHerriger} and use Lemma \ref{lemma}. 
\subsection{Revisiting the coming-down from infinity for the $\Lambda$-coalescent}
A nice introduction to the $\Lambda$-coalescent processes is given in Chapter 3 of Berestycki \cite{Beres2}. Denote the number of blocks in a $\Lambda$-coalescent by $(R_{t}, t\geq 0)$. Started from $n$, this process has the generator $\mathcal{L}$, defined in (\ref{generator}), with $\alpha=0$. An interesting property is that this process can start from infinity. We say that the coming down from infinity occurs if almost surely for any time $t>0$, $R_{t}<\infty$, while $R_{0}=\infty$. In this case, $(R_{t}, t\geq 0)$ will be actually absorbed in $1$ in finite time. The arguments that we use to establish Theorem \ref{main} are mostly adapted from technics due to M\"ohle and Herriger \cite{MohleHerriger}. They have established a new condition for $\Xi$-coalescents (meaning coalescents with simultaneous and multiple collisions) to come down from infinity. Their criterion is based on a new function which corresponds to $\delta$ in the particular case of $\Lambda$-coalescents. Their work relies mostly on linear random recurrences. We give here a proof in a "martingale fashion" for the simpler setting of $\Lambda$-coalescents.
\\

The next lemma is lifted from Lemma 4.1 in \cite{MohleHerriger}, however we provide a proof for the sake of completeness. Let $n\geq 2$ and $x\in (0,1)$. We consider the auxiliary random variable $Y_{n}(x)$ with law:
\begin{center} $\mathbb{P}[Y_{n}(x)=l]=1_{l=n}(1-x)^{n}+\binom{n}{l-1}(1-x)^{l-1}x^{n-l+1}$ for every $l\in \{1,...,n\}$. \end{center}
\medskip
\begin{lemma}\label{majorationdelta}
\begin{itemize}
\item[1)] $\mathbb{E}[Y_{n}(x)]=n(1-x)+1-(1-x)^{n}$,
\item[2)] $\frac{\delta(n)}{n}=\int_{0}^{1}-\log \mathbb{E}[Y_{n}(x)/n]\nu(dx)\leq \sum_{j=2}^{n}-\log\left(\frac{n-j+1}{n}\right)\binom{n}{j}\lambda_{n,j}.$
\end{itemize}
\end{lemma}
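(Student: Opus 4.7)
The plan is to identify the random variable $Y_{n}(x)$ with a truncated shifted binomial. Concretely, if $B\sim\mathrm{Bin}(n,1-x)$, a term-by-term comparison of probability mass functions shows that $Y_{n}(x)=\min(B+1,n)$: the atom $\binom{n}{l-1}(1-x)^{l-1}x^{n-l+1}$ is exactly $\mathbb{P}[B=l-1]$ for every $l\in\{1,\ldots,n\}$, while the extra mass $(1-x)^{n}=\mathbb{P}[B=n]$ at $l=n$ accounts for the event $\{B=n\}$ being sent to $n$ rather than to $n+1$. Part 1) then follows from a one-line computation:
\begin{equation*}
\mathbb{E}[Y_{n}(x)]=\mathbb{E}[B+1]-\mathbb{P}[B=n]=n(1-x)+1-(1-x)^{n}.
\end{equation*}

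For 2), the first equality is pure algebra: dividing the expression just obtained by $n$ gives $\mathbb{E}[Y_{n}(x)/n]=(1-x)+\tfrac{1}{n}(1-(1-x)^{n})=1-\tfrac{1}{n}[nx-1+(1-x)^{n}]$, which is precisely the argument of the logarithm appearing in the definition (\ref{delta}) of $\delta(n)$. Integrating $-\log\mathbb{E}[Y_{n}(x)/n]$ against $\nu(dx)$ therefore reproduces $\delta(n)/n$.

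For the inequality, I would apply Jensen's inequality to the convex function $-\log$:
\begin{equation*}
-\log\mathbb{E}[Y_{n}(x)/n]\;\leq\;\mathbb{E}\!\left[-\log\bigl(Y_{n}(x)/n\bigr)\right]\;=\;\sum_{l=1}^{n}-\log(l/n)\,\mathbb{P}[Y_{n}(x)=l].
\end{equation*}
The bonus atom $(1-x)^{n}$ at $l=n$ contributes $-\log(1)=0$ and is discarded. Performing the index change $j=n-l+1$ (which uses $\binom{n}{l-1}=\binom{n}{j}$) converts the remaining sum into $\sum_{j=1}^{n}-\log\bigl(\tfrac{n-j+1}{n}\bigr)\binom{n}{j}(1-x)^{n-j}x^{j}$, whose $j=1$ term vanishes for the same reason. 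Integrating against $\nu(dx)$ and recognizing $\lambda_{n,j}=\int_{0}^{1}x^{j}(1-x)^{n-j}\nu(dx)$ yields the desired upper bound.

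I do not anticipate a serious obstacle: once $Y_{n}(x)$ is recognized as a shifted and truncated binomial, part 1) reduces to a one-line moment calculation, the first equality in 2) is a rewriting of $\delta(n)$ against this identification, and the inequality is Jensen's inequality followed by a routine bookkeeping index shift. The only mildly subtle point is ensuring that the two "degenerate" terms (the bonus atom at $l=n$ and the $j=1$ summand) both vanish, which is what makes the Jensen bound give exactly the sum indexed from $j=2$.
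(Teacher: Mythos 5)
Your proof is correct and follows essentially the same route as the paper's: Jensen's inequality for the convex function $-\log$, followed by the index change $j=n-l+1$ and the observation that the terms with $-\log(1)=0$ drop out. The only addition is your explicit identification of $Y_{n}(x)$ with $\min(B+1,n)$ for $B\sim\mathrm{Bin}(n,1-x)$, which supplies the "binomial calculations" for part 1) that the paper leaves to the reader.
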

\begin{proof}[Proof of lemma \ref{majorationdelta}.] The first statement is obtained by binomial calculations and is left to the reader, see Remark 7.2.2 for $\Lambda$-coalescent and Equation (2) in \cite{MohleHerriger}. We focus on the second statement. We have
The first statement is obtained by binomial calculations and is left to the reader, see Remark 7.2.2 for $\Lambda$-coalescent and Equation (2) in \cite{MohleHerriger}. We focus on the second statement. We have
\begin{align*}
\frac{\delta(n)}{n}&=\int_{0}^{1}-\log \mathbb{E}[Y_{n}(x)/n]\nu(dx)\\
&\leq \int_{0}^{1}\mathbb{E}[-\log (Y_{n}(x)/n)]\nu(dx) \text{ by the Jensen inequality } (-\log \text{ is convex})\\
&= \sum_{k=1}^{n-1}-\log\left(\frac{k}{n}\right)\int_{0}^{1}\mathbb{P}[Y_{n}(x)=k]\nu(dx)\\
&= \sum_{k=1}^{n-1}-\log\left(\frac{k}{n}\right)\binom{n}{n-k+1}\lambda_{n,n-k+1}\\
&= \sum_{k=2}^{n}-\log\left(\frac{n-k+1}{n}\right)\binom{n}{k}\lambda_{n,k}.\\
\end{align*}
\end{proof}
\begin{theorem}[M\"ohle, Herriger \cite{MohleHerriger}]\label{CDI} Let $\Lambda$ be a finite measure on $[0,1]$ without mass at $0$. The $\Lambda$-coalescent comes down from infinity if and only if \[\sum_{k\geq 2}\frac{1}{\delta(k)}<\infty.\]
Furthermore, we have \[\mathbb{E}[T]\leq 2\sum_{k=2}^{\infty}\frac{1}{\delta(k)},\]
where $T:=\inf\{t\geq 0; R_{t}=1\}$.
\end{theorem}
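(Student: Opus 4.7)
The plan is a Foster--Lyapunov argument applied directly to the block-counting chain $(R_t,t\geq 0)$. The candidate Lyapunov function is
\[
V(n) := 2\sum_{k=2}^{n}\frac{1}{\delta(k)},
\]
chosen so that $V(1)=0$. If I can establish the drift bound $\mathcal{L}V(n)\leq -1$ for every $n\geq 2$, then $V(R_{t\wedge T})+t\wedge T$ is a supermartingale bounded by $V(n)$; the optional stopping theorem then yields $\mathbb{E}_{n}[T]\leq V(n)-V(1)=2\sum_{k=2}^{n}1/\delta(k)$. Provided $\sum_{k\geq 2}1/\delta(k)<\infty$, this bound is uniform in the starting point, and the standard monotone coupling in the initial condition gives $\mathbb{E}_{\infty}[T]\leq 2\sum_{k\geq 2}1/\delta(k)<\infty$, proving both coming down from infinity and the quantitative estimate.

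The heart of the argument is the drift bound. Expanding the generator,
\[
-\mathcal{L}V(n)=2\sum_{k=2}^{n}\binom{n}{k}\lambda_{n,k}\sum_{j=n-k+2}^{n}\frac{1}{\delta(j)}.
\]
I would estimate the inner sum from below in two steps. First, the monotonicity of $j\mapsto \delta(j)/j$ recalled in the text yields $\delta(j)\leq (j/n)\delta(n)$ for $j\leq n$, hence $1/\delta(j)\geq n/(j\,\delta(n))$ and
\[
\sum_{j=n-k+2}^{n}\frac{1}{\delta(j)}\ \geq\ \frac{n}{\delta(n)}\sum_{j=n-k+2}^{n}\frac{1}{j}\ \geq\ \frac{n}{\delta(n)}\log\!\frac{n+1}{n-k+2}.
\]
Second, the elementary algebraic inequality $(n+1)^{2}(n-k+1)\geq n(n-k+2)^{2}$, valid for $2\leq k\leq n$, translates into $\log((n+1)/(n-k+2))\geq \tfrac{1}{2}\log(n/(n-k+1))$. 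Plugging this back in and then invoking Lemma \ref{majorationdelta}, which gives $\sum_{k=2}^{n}\binom{n}{k}\lambda_{n,k}\log(n/(n-k+1))\geq \delta(n)/n$, I obtain
\[
-\mathcal{L}V(n)\ \geq\ \frac{n}{\delta(n)}\sum_{k=2}^{n}\binom{n}{k}\lambda_{n,k}\log\!\frac{n}{n-k+1}\ \geq\ 1,
\]
as required.

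For the converse direction (that coming down from infinity forces $\sum 1/\delta(k)<\infty$), the shortest route is a comparison with Schweinsberg's rate $\gamma_{n}:=\sum_{k=2}^{n}(k-1)\binom{n}{k}\lambda_{n,k}$: applying $-\log(1-y)\geq y$ inside the integrand defining $\delta(n)$ shows $\delta(n)\geq \gamma_{n}$, whence $\sum 1/\delta(k)\leq \sum 1/\gamma_{k}$, and Schweinsberg's criterion guarantees the latter series is finite whenever the $\Lambda$-coalescent comes down from infinity. The delicate step in the whole proof is the drift bound $\mathcal{L}V(n)\leq -1$: all three ingredients (monotonicity of $\delta(n)/n$, the harmonic-to-logarithm comparison, and Lemma \ref{majorationdelta}) must combine so that exactly the right constant comes out --- the factor $2$ in the definition of $V$ is there precisely to absorb the $1/2$ loss in the harmonic-to-logarithm inequality, with no slack to spare in the remaining estimates.
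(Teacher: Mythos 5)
Your proof is correct and follows essentially the same route as the paper: a Foster--Lyapunov/optional-stopping argument on the block-counting chain whose drift bound reduces, via the monotonicity of $n\mapsto\delta(n)/n$, to Lemma \ref{majorationdelta}, combined with the comparison $\delta(n)\geq\psi(n)$ (from $-\log(1-y)\geq y$) and Schweinsberg's criterion for the converse direction. The only difference is in the choice of Lyapunov function: the paper uses the tail sum $f(l)=\sum_{k=l+1}^{\infty}\frac{k}{\delta(k)}\log\left(\frac{k}{k-1}\right)$, for which the telescoping produces the needed logarithm directly with no loss, whereas your partial sum $2\sum_{k=2}^{n}1/\delta(k)$ requires the extra harmonic-to-logarithm comparison and the inequality $(n+1)^{2}(n-k+1)\geq n(n-k+2)^{2}$ --- which does hold, since the quadratic $h(m)=-nm^{2}+(n^{2}+1)m-n$ has roots $1/n$ and $n$ and is nonnegative on $[1,n-1]$ --- at the price of building the factor $2$ into the function; both routes land on the same bound $\mathbb{E}[T]\leq 2\sum_{k\geq 2}1/\delta(k)$.
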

\begin{proof}[Proof of Theorem \ref{CDI}.] Schweinsberg \cite{CDI} established that a necessary and sufficient condition for the coming down from infinity is the convergence of the series $\sum_{l\geq 2}\frac{1}{\psi(l)}$ where \begin{equation} \label{psi} \psi(l):=\sum_{k=2}^{l}\binom{l}{k}\lambda_{l,k}(k-1)=\int_{0}^{1}[lx-1+(1-x)^{l}]x^{-2}\Lambda(dx). \end{equation} 
We easily observe that for all $n\geq 2$, $\delta(n)\geq \psi(n)$. Therefore the divergence of the series $\sum \frac{1}{\delta(n)}$ entails that of $\sum \frac{1}{\psi(n)}$ and we just have to focus on the sufficient part (for a proof of the necessary part based on martingale arguments, we refer to Section 6 of \cite{coaldist}). 
Assume $\sum \frac{1}{\delta(n)}<\infty$, consider the function \[f(l):=\sum_{k=l+1}^{\infty}\frac{k}{\delta(k)}\log\left(\frac{k}{k-1}\right).\] This function is well defined since $\frac{k}{\delta(k)}\log\left(\frac{k}{k-1}\right)\underset{k\rightarrow \infty}{\sim}1/\delta(k)$.
The generator of the block counting process corresponds to $\mathcal{L}$ with $\alpha=0$, thus we study \[\mathcal{L}f(l)=\sum_{k=2}^{l}\binom{l}{k}\lambda_{l,k}[f(l-k+1)-f(l)].\]
We have
\[f(l-k+1)-f(l)\geq \frac{l}{\delta(l)}\sum_{j=l-k+2}^{l}\log \left(\frac{j}{j-1}\right)=\frac{l}{\delta(l)}[\log(l)-\log(l-k+1)]\] and then
\[\mathcal{L}f(l)\geq \frac{l}{\delta(l)}\sum_{k=2}^{l}\binom{l}{k}\lambda_{l,k}\left[-\log\left(\frac{l-k+1}{l}\right)\right].\]
By Lemma \ref{majorationdelta}, we have
\[\sum_{k=2}^{l}\binom{l}{k}\lambda_{l,k}\left[-\log\left(\frac{l-k+1}{l}\right)\right]\geq \delta(l)/l.\]
We deduce that $\mathcal{L}f(l)\geq 1$ for every $l\geq 2$. Then,
since $f(R_{t})-\int_{0}^{t}\mathcal{L}f(R_{s})ds$ is a martingale, by applying the optional stopping theorem at time $T_{n}\wedge k$ where $T_{n}:=\inf\{t; R_{t}=1\}$ when $R_{0}=n$, we get:
\[\mathbb{E}[f(R_{T_{n}\wedge k})]=f(n)+\mathbb{E}\left[\int_{0}^{T_{n}\wedge k}\mathcal{L}f(R_{s})ds\right]\geq f(n)+\mathbb{E}[T_{n}\wedge k]\]
Letting $k \rightarrow \infty$ and using the fact that $f$ is decreasing, we obtain that
\[\mathbb{E}[T_{n}]\leq f(1)-f(n).\] 
Recall that $T_n \uparrow T$ a.s when $n\rightarrow \infty$. The result follows by the monotone convergence theorem.
\end{proof}
\subsection{Proof of Theorem \ref{main}}
The proof is based on three lemmatas. The first lemma states that the process is non-explosive. In the two next lemmatas, we look for martingale arguments. We highlight that Lemmatas \ref{nonexplosion}, \ref{functionf} and \ref{martingale} below are valid for $\alpha^{\star}\in (0,\infty]$. By convention, if $\alpha^{\star}=\infty$, then $1/\alpha^{\star}=0$.

\begin{lemma}\label{nonexplosion}
The process $(R_{t},t\geq 0)$ is non-explosive.
\end{lemma}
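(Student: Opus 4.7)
The plan is to use a standard first-moment (Lyapunov) argument with $f(n)=n$. The key observation is that coalescence events only decrease $R_{t}$, so $R_{t}$ is dominated in expectation by the pure-branching part of its dynamics, which is a linear birth chain and hence non-explosive. Concretely, a direct computation gives
\begin{equation*}
\mathcal{L}f(n)=\sum_{k=2}^{n}\binom{n}{k}\lambda_{n,k}(1-k)+\alpha n=-\psi(n)+\alpha n\leq \alpha n=\alpha f(n),
\end{equation*}
where $\psi(n)\geq 0$ is the function introduced in \eqref{psi}.

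Next I would localise using the hitting times $\tau_{N}:=\inf\{t\geq 0:R_{t}\geq N\}$. Since the chain is conservative up to time $\tau_{N}$ and the generator has bounded rates on $\{1,\ldots,N\}$, Dynkin's formula applied to the stopped process $(R_{t\wedge \tau_{N}})$ yields, for $R_{0}=n$,
\begin{equation*}
\mathbb{E}_{n}[R_{t\wedge \tau_{N}}]=n+\mathbb{E}_{n}\!\left[\int_{0}^{t\wedge \tau_{N}}\!\mathcal{L}f(R_{s})\,ds\right]\leq n+\alpha\int_{0}^{t}\mathbb{E}_{n}[R_{s\wedge \tau_{N}}]\,ds.
\end{equation*}
Grönwall's lemma then gives $\mathbb{E}_{n}[R_{t\wedge \tau_{N}}]\leq n\,e^{\alpha t}$, uniformly in $N$.

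To conclude, I would use Markov's inequality together with the bound $R_{t\wedge\tau_{N}}\geq N$ on the event $\{\tau_{N}\leq t\}$, which gives
\begin{equation*}
\mathbb{P}_{n}[\tau_{N}\leq t]\leq \frac{\mathbb{E}_{n}[R_{t\wedge \tau_{N}}]}{N}\leq \frac{n\,e^{\alpha t}}{N}\xrightarrow[N\to\infty]{}0.
\end{equation*}
Thus the explosion time $\tau_{\infty}=\lim_{N}\tau_{N}$ is almost surely larger than any fixed $t$, and hence infinite almost surely.

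The only subtle point is justifying Dynkin's formula at the stopped time $t\wedge\tau_{N}$; this is standard because $\mathcal{L}$ has only finitely many nonzero rates emanating from each state in $\{1,\ldots,N\}$, so on $[0,\tau_{N})$ the chain makes only finitely many jumps in bounded time and the martingale $f(R_{t\wedge\tau_{N}})-\int_{0}^{t\wedge\tau_{N}}\mathcal{L}f(R_{s})\,ds$ is well defined and bounded. No coming-down-from-infinity hypothesis on $\Lambda$ is needed, and $\alpha^{\star}$ plays no role here.
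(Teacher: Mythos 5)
Your proof is correct, but it takes a genuinely different route from the paper. The paper invokes Reuter's criterion: it shows that any non-negative bounded solution of $\mathcal{L}f=cf$ with $c>0$ must vanish, by proving that a non-trivial solution is necessarily non-decreasing and bounded below by a multiple of the harmonic sum $\sum_{k=1}^{n-1}\frac{1}{\alpha k}$, hence unbounded. You instead run a first-moment Lyapunov argument with $f(n)=n$: the computation $\mathcal{L}f(n)=-\psi(n)+\alpha n\leq \alpha n$ is right (coalescences, having $k-1\geq 1$, only push the drift down), the localisation at $\tau_N$ is legitimate because the total jump rate $\phi(n)+\alpha n$ from each state is finite (each $\lambda_{n,k}\leq\Lambda([0,1])$), Gr\"onwall applies since $R_{t\wedge\tau_N}\leq N$ (up-jumps have size one), and the Markov-inequality conclusion $\mathbb{P}_n[\tau_N\leq t]\leq n e^{\alpha t}/N$ is sound. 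Your approach is arguably more elementary and buys a quantitative bound $\mathbb{E}_n[R_{t\wedge\tau_N}]\leq n e^{\alpha t}$ uniform in $N$ (in essence, stochastic domination of $R$ by a Yule process of rate $\alpha$), whereas the paper's argument is purely qualitative; on the other hand the paper's eigenfunction computation reuses the structure of the generator that appears elsewhere in the article. You are also right that neither $\alpha^{\star}$ nor any coming-down-from-infinity hypothesis plays a role, which matches the paper.
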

\begin{proof} We show that the only non-negative bounded solution of $\mathcal{L}f=cf$ for $c>0$ is the trivial solution $f=0$. Reuter's criterion (see e.g. Corollary 2.7.3 in Norris's book \cite{Norris} or \cite{Reuter}) provides that the process is non-explosive. If
$\mathcal{L}f(n)=cf(n)$ then we have
\[\alpha nf(n+1)=cf(n)-\sum_{k=2}^{n}\binom{n}{k}\lambda_{n,k}f(n-k+1)+(\phi(n)+\alpha n)f(n).\]
This yields 
\begin{align*}
f(n+1)-f(n)&=\frac{c}{\alpha n}f(n)-\sum_{k=2}^{n}\frac{\binom{n}{k}\lambda_{n,k}}{\alpha n}f(n-k+1)+\frac{\phi(n)}{\alpha n}f(n)\\
&= \frac{cf(n)}{\alpha n}+\sum_{k=2}^{n}\frac{\binom{n}{k}\lambda_{n,k}}{\alpha n}(f(n)-f(n-k+1)).
\end{align*}
If $f(1)=0$ then $f(2)=0$ and by an easy induction $f(i)=0$ for all $i\geq 1$. Therefore, if there exists a positive solution, then necessarily $f(1)>0$. Letting $n=1$ in the last equality provides $f(2)-f(1)>0$ (note that when $n=1$, the sum is empty and $\phi(1)=0$). Assume that $f(n)\geq f(i)$ for all $i\leq n$. The last equality above implies that $f(n+1)-f(n)\geq 0$. By induction, we thus have $f(n+1)\geq f(i)$ for all $i\leq n+1$. Finally, $f$ is non-decreasing and one has
$$f(n)=f(1)+\sum_{k=1}^{n-1}(f(k+1)-f(k))\geq \sum_{k=1}^{n-1}\frac{cf(1)}{\alpha k}$$ then $f$ is unbounded.
\end{proof}
\begin{lemma}\label{functionf} Define the function \[f(l):=\sum_{k=2}^{l}\frac{k}{\delta(k)}\log\left(\frac{k}{k-1}\right).\]
Then, with the generator $\mathcal{L}$ of $(R_{t}, t\geq 0)$ defined in (\ref{generator}), we have for all $l\geq 2$
\[\mathcal{L}f(l)\leq -1+\alpha l/\delta(l).\]
\end{lemma}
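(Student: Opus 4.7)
The plan is to split $\mathcal{L}f(l)$ into the coalescence part and the selection (branching) part, and bound each separately, exploiting the monotonicity of $n \mapsto \delta(n)/n$ (equivalently, $n \mapsto n/\delta(n)$ is non-increasing) that was recalled just after the definition of $\delta$ in \eqref{delta}.

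\textbf{Coalescence part.} Since $f$ is a telescoping sum, for $2\le k\le l$,
\[
f(l)-f(l-k+1)=\sum_{j=l-k+2}^{l}\frac{j}{\delta(j)}\log\!\left(\frac{j}{j-1}\right).
\]
The indices $j$ in this sum satisfy $j\le l$, so $j/\delta(j)\ge l/\delta(l)$. Therefore
\[
f(l)-f(l-k+1)\ \ge\ \frac{l}{\delta(l)}\sum_{j=l-k+2}^{l}\log\!\left(\frac{j}{j-1}\right)\ =\ \frac{l}{\delta(l)}\log\!\left(\frac{l}{l-k+1}\right),
\]
and summing against the coalescence rates,
\[
\sum_{k=2}^{l}\binom{l}{k}\lambda_{l,k}\bigl[f(l-k+1)-f(l)\bigr]\ \le\ -\frac{l}{\delta(l)}\sum_{k=2}^{l}\binom{l}{k}\lambda_{l,k}\left[-\log\!\left(\frac{l-k+1}{l}\right)\right].
\]
By Lemma \ref{majorationdelta}, the last sum is at least $\delta(l)/l$, so the coalescence part of $\mathcal{L}f(l)$ is bounded above by $-1$.

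\textbf{Selection part.} Here only one term appears:
\[
\alpha l\bigl[f(l+1)-f(l)\bigr]\ =\ \alpha l\cdot\frac{l+1}{\delta(l+1)}\log\!\left(\frac{l+1}{l}\right)\ \le\ \alpha\cdot\frac{l+1}{\delta(l+1)},
\]
using $\log(1+1/l)\le 1/l$. Since $n\mapsto n/\delta(n)$ is non-increasing, $(l+1)/\delta(l+1)\le l/\delta(l)$, and thus the selection part is bounded above by $\alpha l/\delta(l)$.

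\textbf{Conclusion.} Adding the two bounds gives $\mathcal{L}f(l)\le -1+\alpha l/\delta(l)$, as claimed. The only slightly delicate point is the sign-flip when the telescoping sum lies between $l-k+1$ and $l$: one must invoke the right monotonicity ($n/\delta(n)$ non-increasing) so that the lower bound on $f(l)-f(l-k+1)$ recovers exactly the logarithmic factor appearing in Lemma \ref{majorationdelta}. The rest is a routine combination of that lemma with the two monotonicity properties of $\delta$.
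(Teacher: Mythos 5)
Your proof is correct and follows essentially the same route as the paper: split $\mathcal{L}f(l)$ into the coalescence and branching terms, bound the telescoping increments using the monotonicity of $n\mapsto n/\delta(n)$, invoke Lemma \ref{majorationdelta} to get the $-1$, and bound the selection term by $\alpha(l+1)/\delta(l+1)\leq \alpha l/\delta(l)$. No substantive differences to report.
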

\begin{proof}[Proof of Lemma \ref{functionf}] By definition, 
\[\mathcal{L}f(l)=\sum_{k=2}^{l}\binom{l}{k}\lambda_{l,k}[f(l-k+1)-f(l)]+\alpha l[f(l+1)-f(l)].\]
We have $f(l-k+1)-f(l)=-\sum_{j=l-k+2}^{l}\frac{j}{\delta(j)}\log\left(\frac{j}{j-1}\right)$, and since $(j/\delta(j), j\geq 2)$ is decreasing, for all $j\leq l$, $j/\delta(j)\geq l/\delta(l)$. Therefore
\[f(l-k+1)-f(l)\leq -\frac{l}{\delta(l)}\sum_{j=l-k+2}^{l}\log\left(\frac{j}{j-1}\right)=-\frac{l}{\delta(l)}\log\left(\frac{l}{l-k+1}\right).\]
We deduce that
\begin{align*} \mathcal{L}f(l)&\leq -\frac{l}{\delta(l)}\sum_{k=2}^{l}\binom{l}{k}\lambda_{l,k}\log\left(\frac{l}{l-k+1}\right)+\alpha \frac{l+1}{\delta(l+1)} \underbrace{l\log\left(1+\frac{1}{l}\right)}_{\leq 1}\\
&\leq \frac{l}{\delta(l)}\underbrace{\sum_{k=2}^{l}\binom{l}{k}\lambda_{l,k}\log\left(\frac{l-k+1}{l}\right)}_{\leq -\delta(l)/l}+\alpha \frac{l+1}{\delta(l+1)}\\
&\leq -1+\alpha \frac{l}{\delta(l)}.
\end{align*}
The second inequality holds by Lemma \ref{majorationdelta}.
\end{proof}
The following lemma tells us that if $\alpha<\alpha^{\star}\in (0,\infty]$, then $(R_{t}, t\geq 0)$ is positive recurrent. Applying Lemma \ref{lemma} yields the first part of Theorem \ref{main}.  
\begin{lemma}\label{martingale} Assume $\alpha<\alpha^{\star}$. Then, there exists $n_{0}$, such that for all $n\geq n_{0}$, $\mathbb{E}_{n}[T^{n_{0}}]<\infty$, where
\[T^{n_{0}}:= \inf\{s\geq 0; R_{s}<n_{0}\}.\]
Thus, the process $(R_{t}, t\geq 0)$ is positive recurrent. 
\end{lemma}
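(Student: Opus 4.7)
The plan is to use the function $f$ from Lemma \ref{functionf} as a Lyapunov function for the chain $(R_t, t \geq 0)$. Since $\delta(l)/l \uparrow \alpha^{\star}$ and $\alpha < \alpha^{\star}$, I can choose $n_0 \geq 2$ and $\varepsilon > 0$ so that $\alpha l/\delta(l) \leq 1 - \varepsilon$ for every $l \geq n_0$; substituting into the estimate of Lemma \ref{functionf} produces the drift inequality $\mathcal{L}f(l) \leq -\varepsilon$ on $\{l \geq n_0\}$.

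With this negative drift outside a finite set, I would run a Foster--Lyapunov argument. Introduce the local martingale $M_t = f(R_t) - \int_0^t \mathcal{L}f(R_s)\, ds$ and localize it by the hitting times $\sigma_K := \inf\{t : R_t \geq K\}$, on which $f$ and $\mathcal{L}f$ are bounded, so that $M_{\cdot \wedge \sigma_K}$ is a genuine martingale. Stopping at $t \wedge T^{n_0} \wedge \sigma_K$, taking expectations, using $R_s \geq n_0$ on the interval $[0, T^{n_0}]$ to bound the integral term by $-\varepsilon (t \wedge T^{n_0} \wedge \sigma_K)$, and invoking $f \geq 0$, one obtains the inequality $\varepsilon\, \mathbb{E}_n[t \wedge T^{n_0} \wedge \sigma_K] \leq f(n)$.

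The main obstacle, and the reason Lemma \ref{nonexplosion} was established beforehand, is to remove the localization: the selective branching mechanism can drive $R$ to arbitrarily large values before it ever returns, and $f$ itself is unbounded on $\mathbb{N}$. Non-explosion gives $\sigma_K \to \infty$ almost surely, so monotone convergence first in $K$ and then in $t$ yields $\mathbb{E}_n[T^{n_0}] \leq f(n)/\varepsilon < \infty$ for all $n \geq n_0$.

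To conclude positive recurrence I would verify $\mathbb{E}_{n_0}[T_{n_0}] < \infty$ for the first return time to $n_0$. Conditioning on the first jump from $n_0$, the chain lands either at some state $k < n_0$, or at $n_0 + 1$; in the latter case the bound above gives a finite expected time until $R$ enters $\{1,\ldots,n_0-1\}$. Starting in this finite set, since the branching mechanism only increases $R$ by one at a time, reaching $n_0$ is the absorption time of a continuous-time Markov chain on a finite state space into an accessible absorbing class, which has finite mean. A strong Markov argument then combines these estimates to give $\mathbb{E}_{n_0}[T_{n_0}] < \infty$, yielding the positive recurrence asserted in the lemma.
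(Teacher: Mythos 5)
Your proposal is correct and follows essentially the same route as the paper: the same Lyapunov function $f$ from Lemma \ref{functionf}, the same drift bound obtained from $\delta(l)/l\uparrow\alpha^{\star}$, localization by the first passage above a high level, and Lemma \ref{nonexplosion} to let that level tend to infinity (the paper truncates $f$ to $f_N$ so that Dynkin's formula applies directly, which is the same device as your $\sigma_K$-localization). Your final paragraph deducing positive recurrence from the finite hitting-time bound is a detail the paper leaves implicit, and your argument for it is sound.
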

\begin{proof}[Proof of Lemma \ref{martingale}] 
For every $N\in \mathbb{N}$, define $$f_{N}(l):=f(l)1_{l\leq N+1}.$$ By Dynkin's formula, the process \[\left(f_{N}(R_{t})-\int_{0}^{t}\mathcal{L}f_{N}(R_{s})ds, t\geq 0\right)\] is a martingale. One can easily check that $\mathcal{L}f_N(l)=\mathcal{L}f(l) \mbox{ if } l\leq N$. For any $\epsilon >0$ there exists $n_{0}$ such that for all $l\geq n_{0}$, 
\begin{equation}\label{epsilon}
\frac{l}{\delta(l)}\leq \frac{1}{\alpha^{\star}}+\epsilon. 
\end{equation}
Let $n_0\leq n \leq N$ and consider the stopping time $S_N:=\inf \{s\geq 0; R(s)\geq N+1\}$.  
We apply the optional stopping theorem to the bounded stopping time $T^{n_{0}}\wedge S_{N}\wedge k$ and obtain
\begin{align*}
\mathbb{E}_{n}[f_{N}(R_{T^{n_{0}}\wedge S_{N}\wedge  k})]&= f_{N}(n) + \mathbb{E}\left[\int_{0}^{T^{n_{0}}\wedge S_{N} \wedge k}\mathcal{L}f_{N}(R_{s})ds\right]\\
&\leq f_{N}(n)+\mathbb{E}\left[\int_{0}^{T^{n_{0}}\wedge S_{N} \wedge k}\left( -1+ \alpha \frac{R_{s}}{\delta(R_{s})}\right)ds\right]\\
&\leq f_{N}(n)+\mathbb{E}\left[\int_{0}^{T^{n_{0}}\wedge S_{N}\wedge k}\left(-1+\alpha(\frac{1}{\alpha^{\star}}+\epsilon) \right)ds\right]\\
&= f_{N}(n)+\left(\frac{\alpha}{\alpha^{\star}}-1+\epsilon \alpha \right)\mathbb{E}[T^{n_{0}}\wedge S_{N} \wedge k].
\end{align*}
The first inequality follows from the equality $\mathcal{L}f_{N}(l)=\mathcal{L}f(l)$ when $l\leq N$ and from Lemma \ref{functionf}. The second inequality follows from (\ref{epsilon}). For small enough $\epsilon$, $1-\frac{\alpha}{\alpha^{\star}}-\epsilon \alpha >0$, thus
\[\underbrace{(1-\frac{\alpha}{\alpha^{\star}}-\epsilon \alpha)}_{>0}\mathbb{E}[T^{n_{0}}\wedge S_{N} \wedge k]\leq f_{N}(n)-\mathbb{E}_{n}[f_{N}(R_{T^{n_{0}}\wedge S_{N}\wedge k})]\leq f_{N}(n),\]
On the one hand, since the process is non-explosive, $S_{N}\underset{N\rightarrow \infty}\longrightarrow \infty$ almost surely and 
therefore, for all $n\geq n_{0}$ \[(1-\frac{\alpha}{\alpha^{\star}}-\epsilon \alpha)\mathbb{E}[T^{n_{0}}\wedge k]\leq f(n).\]
On the other hand, by letting $k\rightarrow \infty$ we get
\begin{center} $\mathbb{E}[T^{n_{0}}]\leq Cf(n)$ for all $n\geq n_{0}$\end{center} 
with $C$ a constant depending only on $\epsilon$. 
\end{proof}

In order to get statement 2) of Theorem \ref{main}, we will apply the second part of Lemma \ref{lemma}. Namely, we show that if $\alpha>\alpha^{\star}$, then $(R_t, t\geq 0)$ is transient.
\begin{lemma} \label{transience} If $\alpha>\alpha^{\star}$ then $R_t \underset{t\rightarrow \infty}{\longrightarrow} \infty$ almost surely.
\end{lemma}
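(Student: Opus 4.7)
The claim is that $R_t\to\infty$ almost surely when $\alpha>\alpha^{\star}$. Since $(R_t)$ is irreducible on $\mathbb{N}$ and non-explosive by Lemma~\ref{nonexplosion}, this is equivalent to transience of the chain, so my plan is to prove transience by a Foster--Lyapunov-type argument.

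Concretely, it suffices to exhibit a bounded non-negative function $V:\mathbb{N}\to\mathbb{R}_+$ with $V(l)\to 0$ as $l\to\infty$ and $\mathcal{L}V(l)\leq 0$ for all $l\geq n_0$, for some threshold $n_0$. Setting $T_{n_0}:=\inf\{t\geq 0:R_t<n_0\}$ and $m:=\min_{1\leq k<n_0}V(k)>0$, the stopped process $V(R_{t\wedge T_{n_0}})$ is a non-negative bounded supermartingale, and optional stopping gives
\[
m\,\mathbb{P}_n[T_{n_0}<\infty]\;\leq\;V(n)\;\underset{n\to\infty}{\longrightarrow}\;0.
\]
Hence for $n$ large the chain avoids $\{1,\ldots,n_0-1\}$ with positive probability; irreducibility then forces transience of $(R_t)$, so $R_t\to\infty$ almost surely.

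A natural candidate is $V(l)=e^{-\theta f(l)}$ for small $\theta>0$, with $f$ as in Lemma~\ref{functionf}. Since $\delta(k)/k\uparrow\alpha^{\star}$ with $\alpha^{\star}<\infty$, the general term $\frac{k}{\delta(k)}\log(k/(k-1))$ behaves like $(\alpha^{\star}k)^{-1}$ and $f(l)\to\infty$, so $V(l)\to 0$. A mean-value/convexity estimate for $e^{-\theta\cdot}$ gives, at leading order in $\theta$,
\[
\mathcal{L}V(l)\;\approx\;-\theta\,V(l)\,\mathcal{L}f(l),
\]
and the bound $\mathcal{L}f(l)\leq -1+\alpha l/\delta(l)$ from Lemma~\ref{functionf}, combined with $l/\delta(l)\to 1/\alpha^{\star}$ and the strict inequality $\alpha>\alpha^{\star}$, suggests the right sign of $\mathcal{L}V(l)$ for $l$ large.

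The main obstacle is the quantitative verification that $\mathcal{L}V(l)\leq 0$ for $l\geq n_0$. A single coalescent event can drive $R$ from $l$ all the way down to $1$, producing an upward jump of $V$ of order $1-V(l)$ that is not captured by a first-order expansion in $\theta$. My plan would be to split the coalescent rate $\sum_{k}\binom{l}{k}\lambda_{l,k}$ according to jump size --- treating small jumps by Taylor expansion in $\theta$, and controlling large jumps using the integrability $\alpha^{\star}<\infty$, which forbids $\Lambda(\{1\})>0$ and restricts the mass of $\nu$ near $1$ --- and then to balance these positive contributions against the unit-step decrease of $V$ produced by the selection term. Choosing $\theta$ small enough and $n_0$ large enough should then deliver $\mathcal{L}V(l)\leq 0$ for all $l\geq n_0$, completing the proof.
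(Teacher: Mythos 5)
Your reduction of the lemma to a Foster--Lyapunov condition is sound and is exactly the paper's framework: exhibit a bounded non-negative $g$ with $g(l)\to 0$ and $\mathcal{L}g(l)\le 0$ for $l\ge n_0$, make $g(R_{t\wedge T_{n_0}})$ a supermartingale, deduce $\mathbb{P}_n[T_{n_0}<\infty]<1$, and conclude transience by irreducibility. The genuine gap is in your choice of Lyapunov function, and the "quantitative verification" you defer is not a technicality: it fails. Since $\frac{k}{\delta(k)}\log\frac{k}{k-1}\sim\frac{1}{\alpha^{\star}k}$, one has $f(l)=\frac{1}{\alpha^{\star}}\log l+O(1)$, so $V(l)=e^{-\theta f(l)}$ decays \emph{polynomially}, $V(l)\asymp l^{-\theta/\alpha^{\star}}$, and your first-order computation produces a negative drift of order $\theta\bigl(\tfrac{\alpha}{\alpha^{\star}}-1\bigr)V(l)=O\bigl(l^{-\theta/\alpha^{\star}}\bigr)$. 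But $\mathcal{L}V(l)$ contains the single non-negative term $\lambda_{l,l}\bigl(V(1)-V(l)\bigr)$ coming from a total coalescence $l\mapsto 1$, with $V(1)-V(l)\to 1$ and $\lambda_{l,l}=\int_0^1x^{l}\nu(dx)$. The hypothesis $\alpha^{\star}<\infty$ only forces $\int_0^1x^l\nu(dx)=o(1/\log l)$, not polynomial decay: for $\nu(dx)=\frac{dx}{(1-x)\log^{3}(1/(1-x))}$ on $[1/2,1)$ one has $\alpha^{\star}=\int_{\log 2}^{\infty}u^{-2}du<\infty$ while $\int_0^1x^l\nu(dx)\asymp(\log l)^{-2}$. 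For such a $\Lambda$, dropping the other (non-negative) coalescence terms gives $\mathcal{L}V(l)\ge\tfrac12\lambda_{l,l}-C\theta\,V(l)>0$ for all large $l$, and this holds for \emph{every} fixed $\theta>0$ (shrinking $\theta$ slows the decay of $V$ but shrinks the negative drift proportionally). So your plan of "controlling large jumps using the integrability $\alpha^{\star}<\infty$" cannot be completed with this $V$: any Lyapunov function tending to $0$ faster than the total-coalescence rate is doomed.

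The cure is a function that decays only logarithmically, so that the reward for a catastrophic downward jump is commensurate with the drift that can be accumulated. This is what the paper does: it takes $g(n)=1/\log(n+1)$, rewrites the coalescence part of $\mathcal{L}g(n)$ as $\frac{1}{\log(n+1)}\int\nu(dx)\,\mathbb{E}\bigl[\cdots\bigr]$ in terms of a binomial variable $B_n(x)$, and shows via a Chebyshev concentration estimate (using $\int x(1-x)\nu(dx)<\infty$, which follows from $\alpha^{\star}<\infty$) that this equals $\frac{\alpha^{\star}+o(1)}{\log(n+1)\log(n+2)}$, while the selection term contributes $\frac{-\alpha+o(1)}{\log(n+1)\log(n+2)}$; the strict inequality $\alpha>\alpha^{\star}$ then yields $\mathcal{L}g(n)<0$ for $n$ large. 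If you wish to keep a function built from $f$, something like $1/(1+f(l))$ (which is $\asymp\alpha^{\star}/\log l$) is the right scale, not $e^{-\theta f(l)}$.
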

\begin{proof}[Proof of Lemma \ref{transience}] 
If $g$ is a bounded function such that $\mathcal{L}g(n)<0$ for all $n>n_{0}$, the process $(g(R_{t\wedge T^{n_{0}}}), t\geq 0)$ when starting from $n>n_{0}$, is a supermartingale; with $T^{n_{0}}:=\inf\{t>0, R_{t}<n_{0}\}$. Applying the martingale convergence theorem yields that $\mathbb{P}_{n}(T^{n_{0}}<\infty)<1$. Therefore the process $(R_{t}, t\geq 0)$ is not recurrent, and by irreducibility is transient. We show that the function $g(n):=\frac{1}{\log(n+1)}$ fulfills these conditions. One has
$$\mathcal{L}g(n)=\sum_{k=2}^{n}\binom{n}{k}\lambda_{n,k}\left[\frac{1}{\log(n-k+2)}-\frac{1}{\log(n+1)}\right]+\alpha n \left[\frac{1}{\log(n+2)}-\frac{1}{\log(n+1)}\right].$$
On the one hand, one can easily check that
\[\alpha n \left[\frac{1}{\log(n+2)}-\frac{1}{\log(n+1)}\right]= \alpha n \frac{\log\left(\frac{n+1}{n+2}\right)}{\log(n+2)\log(n+1)}=-\alpha\frac{1}{\log(n+2)\log(n+1)}(1+o(1)).\] 
On the other hand, denote by $B_{n}(x)$ a random variable with a binomial law $(n,x)$. We have
\begin{align*}
\mathcal{L}^{0}g(n):=\sum_{k=2}^{n}&\binom{n}{k}\lambda_{n,k}\left[\frac{1}{\log(n-k+2)}- \frac{1}{\log(n+1)}\right]\\
&=\int_{0}^{1}\frac{\Lambda(dx)}{x^{2}}\mathbb{E}\left[\frac{\log\left(\frac{n+1}{n-B_{n}(x)+2}\right)}{\log(n+1)\log(n-B_{n}(x)+2)}\right]\\
&=\frac{1}{\log(n+1)}\int_{0}^{1}\frac{\Lambda(dx)}{x^{2}}\mathbb{E}\left[\frac{-\log\left(1-\frac{B_{n}(x)-1}{n+1}\right)}{\log(n+2)+\log\left(1-\frac{B_{n}(x)}{n+2}\right)}\right].
\end{align*}
The last equality holds true since for all $2 \leq k \leq n$, $\log\left(\frac{n+1}{n-k+2}\right)=-\log \left(1-\frac{k-1}{n+1}\right)$ and $\log(n-k+2)=\log(n+2)+\log\left(1-\frac{k}{n+2}\right)$. Moreover $$|(n+1)x-(B_{n}(x)-1)|\leq |nx-B_{n}(x)|+|x+1|$$ and by Chebyshev's inequality, we have 
\begin{align*}
\mathbb{P}\left[\left| x- \frac{B_{n}(x)-1}{n+1} \right|>(n+1)^{-1/3}\right] &\leq \frac{\text{Var}(B_{n}(x))}{\big((n+1)^{2/3}-(1+x)\big)^{2}}\\
&\leq \frac{nx(1-x)}{\big((n+1)^{2/3}-2\big)^{2}}.   
\end{align*}
Notice that $n\big((n+1)^{2/3}-2\big)^{-2}\underset{n\rightarrow \infty}{\sim}n^{-1/3}$ and $\int_{0}^{1}\frac{\Lambda(dx)}{x^{2}}x(1-x)<\infty$.
Therefore, from the last expression of $\mathcal{L}^{0}g(n)$ above, we have
\[\mathcal{L}^{0}g(n)=\frac{1}{\log(n+1)\log(n+2)}(\alpha^\star+o(1)),\]
and thus, since $\alpha>\alpha^\star$
\[\mathcal{L}g(n)=\frac{1}{\log(n+1)\log(n+2)}\big(\alpha^\star-\alpha+o(1)\big)<0, \text{ for } n \text{ large enough.}\]

\end{proof}
\begin{remark}\label{rem}
Bah and Pardoux \cite{Bah} have established (Theorem 4.3) that the absorption of the process $(X_{t}, t\geq 0)$ in finite time is almost sure if and only if the underlying $\Lambda$-coalescent comes down from infinity. Furthermore, Proposition 4.4 in \cite{Bah} states that for all $x\in (0,1)$, $0<\mathbb{P}[X_{\zeta}=0|X_{0}=x]<1$ where $\zeta$ is the absorption time. In such cases, Theorem \ref{CDI} yields plainly that $\alpha*=\infty$. We stress that our arguments still hold when $\Lambda(\{0\})>0$, one only has to add the quadratic term $\Lambda(\{0\})\binom{n}{2}$ to the function $\delta$ (Equation \ref{delta}).
\end{remark}
We end this article by observing a link between the threshold $\alpha^{\star}$ and the first moment of a subordinator.
\begin{remark}
Assume $\alpha^{\star}<\infty$. Then, the corresponding $\Lambda$-coalescent process has dust, meaning that it has infinitely many singleton blocks at any time. As time passes, the asymptotic frequency of the singleton blocks altogether is given by a process $(D(t), t\geq 0)$ with values in $]0,1]$ such that \[(D(t), t\geq 0)=(\exp(-\xi_{t}), t\geq 0)\] where $\xi$ is a subordinator with Laplace exponent \[\phi(q)=\int_{0}^{1}[1-(1-x)^{q}]x^{-2}\Lambda(dx).\] 
We refer the reader to Proposition 26 in Pitman \cite{Pitman}. An interesting feature, easily checked, is that $\alpha^{\star}=\mathbb{E}[\xi_{1}]$. Hence one could expect some fluctuations in $(R_{t},t\geq 0)$ when considering the critical case $\alpha=\alpha^{\star}$. We note, in this context, that R. Griffiths proved in \cite{Griffiths} that $X_{\infty}=0$ almost-surely when $\alpha=\alpha^{\star}$ by a different analytical method.  
\\
\\
Let us also mention that several authors (Gnedin et al. \cite{MR2896672} and Lager{\aa}s \cite{Lageras} for instance) have studied coalescents with a dust component through the theory of regenerative compositions. 
\end{remark}


\ACKNO{I am grateful to colleagues from Goethe Universit\"at Frankfurt  who detected a gap in the original proof of Lemma 2.5 and devised the arguments with the Lyapunov function $g$.
\\
\\
This research was supported by the Research Training Group 1845 of Berlin (German Research Council (DFG)). The author is grateful to Matthias Hammer for fruitful discussions and thanks Jochen Blath and Etienne Pardoux. The author would like to thank the referees for their very careful reading.}


\end{document}